\documentclass[a4paper,12pt,twoside]{amsart}

\usepackage{amsmath,amssymb,amsfonts,amsthm,mathtools}
\usepackage[english]{babel}
\usepackage[utf8]{inputenc}
\usepackage{url}
\usepackage[pdftex,citecolor=green,linkcolor=red]{hyperref}
\usepackage[a4paper,inner=3cm,outer=3cm,top=4cm,bottom=4cm,pdftex]{geometry}
\usepackage{fancyhdr}

\usepackage{todonotes}
\newtheorem{theorem}{Theorem}[section]

\newtheorem{lemma}[theorem]{Lemma}

\newtheorem{remark}[theorem]{Remark}
\numberwithin{equation}{section}

\newcommand\eps{\varepsilon}
\newcommand\Z{\mathbb{Z}}
\newcommand\E{\mathbb{E}}
\newcommand\Prb{\mathbb{P}}

\newcommand\cD{{\mathcal D}}
\newcommand\cE{{\mathcal E}}
\newcommand\cI{{\mathcal I}}
\newcommand\cP{{\mathcal P}}
\newcommand\1{{\mathbf 1}}
\newcommand{\cond}{\operatorname{cond}}
\newcommand{\re}{\operatorname{Re}}

\renewcommand{\phi}{\varphi}

\begin{document}

\author[O. Klurman]{Oleksiy Klurman}
\address{School of Mathematics, University of Bristol, Woodland Road, Bristol, UK}
\email{lklurman@gmail.com}

\author[I. E. Shparlinski]{Igor E. Shparlinski}
\address{School of Mathematics and Statistics, University of New South Wales, Sydney, NSW 2052, Australia}
\email{igor.shparlinski@unsw.edu.au}

\author[J. Ter\"av\"ainen]{Joni Ter\"av\"ainen}
\address{Department of Pure Mathematics and Mathematical Statistics, University of Cambridge, Cambridge CB3 0WB, UK}
\email{joni.p.teravainen@gmail.com}

\title{On the generation of multiplicative groups by small primes}

\begin{abstract}
 Motivated by a question of Regev arising from his improved quantum factoring algorithm, we study how many small primes are needed to generate the group $(\Z/q\Z)^\times$ when each prime may be used with exponent only $0$ or $1$.  We prove that, for every fixed $\eps>0$ and $A>0$, there is an absolute constant $C_*$ and a set of at most $(\log Q)^{1+\eps}$ primes, all at most $(\log Q)^{C_*(A+1)}$, such that for all but $O(Q(\log Q)^{-A})$ (with the implied constant depending only on $\eps$ and $A$) integers $q\leq Q$ every element of $(\Z/q\Z)^\times$ is a product of a subset of these primes modulo $q$.  The exponent $1+\eps$ in the number of primes is best possible up to the arbitrary $\eps$ in the exponent.
\end{abstract}

\keywords{small generator sets, character sums, primes in arithmetic progressions, zero density estimates}
\subjclass[2020]{11L40, 11N13, 11Y16}

\maketitle

\section{Introduction}

\subsection{Motivation and the main result}
In 1994, Shor developed a celebrated quantum algorithm for factoring integers and computing discrete logarithms in polynomial time~\cite{shor94,shor97}.  In the factoring setting, the algorithm factors an $n$-bit integer using a quantum circuit with $\widetilde O(n^2)$ gates; here and below the $\widetilde O$ notation suppresses powers of $\log n$.  A recent breakthrough of Regev~\cite{regev} introduced a multidimensional variant with $\widetilde O(n^{3/2})$ gates, conditionally on a number-theoretic generation statement about $(\Z/N\Z)^\times$ by very small primes. A recent important work of Pilatte~\cite{pilatte} proved a version of Regev's conjecture, which was sufficient to deduce unconditional correctness of a slight modification of Regev's algorithm and related variants.

One form of Pilatte's generation result~\cite{pilatte} can be summarised as follows (see~\cite[Corollary~1.4]{pilatte}).  Let $N>2$, let $d=\lceil\sqrt{\log N}\rceil$ and let
$$
X=d^{10^3d}=\exp((\log N)^{1/2+o(1)}).
$$ 
If $p_1,\ldots,p_d$ are independent and uniformly distributed random primes at most $X$ and not dividing $N$, then, with probability $1+o(1)$, every element $x$ of the subgroup $\langle p_1,\ldots,p_d\rangle\leq (\Z/N\Z)^\times$ can be written as
$$
x\equiv \prod_{i=1}^d p_i^{e_i}\bmod N,\qquad \textnormal{for some integers}\quad |e_i|\leq e^{O(d)}.
$$ 
This gives the existence of short signed exponent vectors in a randomly generated subgroup.  In this paper we consider a Boolean exponent version in which the exponents are restricted to $0$ and $1$, and we ask for a single set of primes, depending only on $Q$, whose subset products cover the whole group $(\Z/q\Z)^\times$ for almost all moduli $q\leq Q$.

As usual, we write $O_{\varrho}(\cdot)$ to indicate that the implied constant may depend on the parameter, or
a vector of parameters, $\varrho$; we apply the same convention to its equivalents $\ll_{\varrho}$ and 
$\gg_{\varrho}$ (and absence of parameters indicates absolute constants), see also Section~\ref{sec:not}. 

Given a set $\cP$ of primes, the problem is whether every unit modulo $q$ can be represented in the form
$$
\prod_{p\in\cP} p^{a_p}\bmod q, \qquad a_p\in\{0,1\}.
$$
A cardinality consideration directly implies that $|\cP|\geq\log_2(\varphi(q))\gg \log q$ is necessary for all moduli $q\geq 3$. Thus, we must have  $|\cP|\gg \log Q$ if the Boolean products of elements of $\cP$ represent every unit for at least $Q^{1/10}$ moduli $q\leq Q$, say.  A strong conditional result can be obtained by assuming the Generalised Riemann Hypothesis (GRH).  Fix $B>4$ and set $P=(\log q)^B$ with $q$ large.  A standard consequence of GRH gives, uniformly for every non-principal character $\chi\bmod q$,
$$
\left|\sum_{p\leq P}\chi(p)\right|\leq \frac{1}{100}\pi(P)
$$
for all sufficiently large $q$; see~\cite[Equation~(13.21)]{Mont-Vaughan}. The primes dividing $q$ do not contribute to this sum and $\omega(q)\ll \log q\ll \pi(P)^{1/2}$, say, so the contribution of such primes is negligible. A simple orthogonality argument then shows that the subset products of the primes $p\leq P$ with $p\nmid q$ represent every element of $(\Z/q\Z)^\times$.  Theorem~\ref{thm_generation} is an unconditional almost-all approximation to this GRH result.  It uses at most $(\log Q)^{1+\eps}$ primes, the optimal power of $\log Q$ up to the arbitrary $\eps$, and saves an arbitrary fixed power of $\log Q$ in the exceptional set.

\begin{theorem}[Generation of $(\Z/q\Z)^\times$ by subset products of small primes]\label{thm_generation}
There is an absolute constant $C_*\geq 1$ with the following property.  Let $\eps\in(0,1)$ and $A>0$.  For every $Q\geq 3$, there exists a set $\cP\subseteq[1,(\log Q)^{C_*(A+1)}]$ of primes, with
$$
|\cP|\leq (\log Q)^{1+\eps},
$$
such that for all but
$$
 O_{\eps,A}\bigl(Q(\log Q)^{-A}\bigr)
$$
positive integers $q\leq Q$ we have
\begin{align}\label{eq:Pset}
\left\{\prod_{p\in\cP}p^{a_p}\bmod q\colon a_p\in\{0,1\}\right\}= (\Z/q\Z)^\times .
\end{align}
\end{theorem}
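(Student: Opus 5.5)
Our plan is to split the $\{0,1\}$-products into independent blocks of primes. For each block we show that its subset products equidistribute in $(\Z/q\Z)^\times$ up to a small error. The blocks are then combined by the convolution structure of character sums, which is exactly the GRH-style orthogonality argument described above. The one unconditional input we need is a bound for prime character sums that holds for all moduli $q\le Q$ outside a sparse set. We get it from zero density estimates for Dirichlet $L$-functions in the large-sieve form
$$
\sum_{q\le Q}\;\sideset{}{^*}\sum_{\chi \bmod q} N(\sigma,T,\chi)\ll (Q^2T)^{c(1-\sigma)}(\log QT)^{O(1)},
$$
together with the usual reduction of imprimitive characters to primitive ones.

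\textbf{Step 1.} Set $P=(\log Q)^{C}$ with $C=C_*(A+1)$, and fix a small $\delta>0$. We aim to show that, for all $q\le Q$ outside a set of size $O(Q(\log Q)^{-A})$, every nonprincipal $\chi \bmod q$ has $|\sum_{P<p\le 2P}\chi(p)|\le \delta\pi(P)$.
\begin{itemize}
\item By the explicit formula, a large sum forces a zero of $L(s,\chi^*)$, where $\chi^*$ is the primitive character inducing $\chi$, in the region $\sigma\ge 1-c'/C$, $|t|\le P^{O(1)}$.
\item The modulus of $\chi^*$ is a divisor $d\mid q$. Each such $d$ is counted at most $Q/d$ times among the $q\le Q$.
\item The density estimate shows that the number of primitive characters of conductor $d\le Q$ with such a zero is at most $Q^{O(1/C)}$ times logarithmic factors.
\end{itemize}
This alone is not strong enough. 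Bad conductors $d$ with $d\le (\log Q)^{A+1}$ could each kill a positive proportion of the moduli $q$. We therefore use the refinement that a small conductor is handled by Siegel-type or log-free zero density bounds with $T$ small: only boundedly many small conductors have exceptional zeros, and in that range we take $C_*$ large. Large conductors $d>(\log Q)^{A}$ contribute at most $\sum_{d}Q/d\ll Q(\log Q)^{-A}$. This is where $C_*(A+1)$ enters. We expect this step to be the main obstacle: getting exceptional-set saving $(\log Q)^{-A}$ uniformly, including the possible Siegel zero. We would first try Gallagher's log-free density estimate combined with the Deuring--Heilbronn repulsion phenomenon. If a Siegel modulus $d_0$ does occur, we handle the multiples of $d_0$ by choosing the primes in $\cP$ with a fixed character value, or we absorb those multiples into the exceptional set when $d_0$ is large.

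\textbf{Step 2 (choosing $\cP$).} Partition $\cP$ into $k\approx (\log Q)^{\eps}$ blocks $\cP_1,\dots,\cP_k$, each consisting of $m\approx\log Q$ primes. We choose them randomly, or as consecutive dyadic-range primes with a deterministic selection. For a good modulus $q$ and nonprincipal $\chi$, the character sum over the subset products factorises:
$$
\sum_{a}\chi\Bigl(\prod p^{a_p}\Bigr)=\prod_{p\in\cP}(1+\chi(p)).
$$
We want $\prod_p|1+\chi(p)|/2\le \varphi(q)^{-2}\cdot 2^{-O(1)}$. Since $|1+z|/2\le \exp(-c|1-z|^2)$ for $|z|=1$, it suffices that $\sum_{p\in\cP}|1-\chi(p)|^2\ge 3\log\varphi(q)/c$.

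\textbf{Step 3.} We have $\sum_p|1-\chi(p)|^2=2\sum_p(1-\re\chi(p))$. Step 1 shows that $\re\chi(p)\le 1-\delta'$ on average over the primes in a range. A random subset of size $(\log Q)^{1+\eps}$ then has $\sum_{p\in\cP}(1-\re\chi(p))\gg(\log Q)^{1+\eps}$ for every character of every good modulus simultaneously. To see this, use Hoeffding's inequality with failure probability $\exp(-(\log Q)^{1+\eps})$, and take a union bound over the at most $Q^2$ pairs $(q,\chi)$; this is precisely where the $\eps$ is spent. The bound $(\log Q)^{1+\eps}\gg\log\varphi(q)$ then gives
$$
\Bigl|\sum_{\chi\ne\chi_0}\bar\chi(x)\prod_p(1+\chi(p))\Bigr|<2^{|\cP|}\frac{\varphi(q)}{\varphi(q)}.
$$
Primes dividing $q$ number only $O(\log Q)$, so they are negligible. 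By orthogonality every $x$ is represented, and we fix one realisation of $\cP$ that succeeds.
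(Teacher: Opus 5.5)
Your Step 1 has a genuine gap in the small-conductor range, and it is exactly the one you flag.

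\textbf{The small-conductor gap.} Your target there is two-sided cancellation, $|\sum_{P<p\le 2P}\chi(p)|\le\delta\pi(P)$ for every nonprincipal $\chi$ modulo every non-exceptional $q$. This cannot be proved unconditionally with an exceptional set of size $O(Q(\log Q)^{-A})$. Suppose a real primitive $\chi_1\bmod d_0$ with, say, $d_0\le(\log Q)^{A/2}$ had a Siegel zero $\beta_1$ with $(1-\beta_1)\log P$ small; Siegel's bound $1-\beta_1\gg_\eta d_0^{-\eta}$ does not rule this out. Then $\sum_{P<p\le 2P}\chi_1(p)\approx-(\pi(2P)-\pi(P))$. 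Your target then fails for all $\lfloor Q/d_0\rfloor\gg Q(\log Q)^{-A/2}$ multiples of $d_0$, which is far more than the allowed exceptional set.

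Your proposed remedies do not close this. Absorbing these multiples into the exceptional set works only when $d_0\ge(\log Q)^{A}$. ``Choosing the primes with a fixed character value'' is neither specified nor shown to preserve separation for the remaining characters. Knowing that only boundedly many small conductors are bad does not help either, since a single bad conductor $d$ already costs about $Q/d$ moduli.

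The missing idea is that only the one-sided quantity $\sum_{p\le P}(1-\re\chi^*(p))$ is needed, which is all your Step 3 actually uses. A Siegel zero only pushes $\chi_1(p)$ towards $-1$, which helps this quantity. The paper obtains it from the quantitative Linnik theorem. For $d=\cond\chi^*\le Y=(\log Q)^K$ and $P\ge d^{L_0}$, every reduced class modulo $d$ contains $\gg_\eta P/(d^{1+\eta}\log P)$ primes up to $P$. Using $\sum_{a}(1-\re\chi^*(a))=\phi(d)$, this gives
\[
\sum_{p\le P}(1-\re\chi^*(p))\ge\sum_{a\in(\Z/d\Z)^\times}(1-\re\chi^*(a))\,\pi(P;d,a)\gg_\eta\pi(P)\,d^{-2\eta}.
\]
With $\eta$ small in terms of $\eps/K$, this is at least $|\cI|(\log Q)^{-\eps/2}$. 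This $d^{-2\eta}$ loss is what the extra factor $(\log Q)^{\eps}$ in $|\cP|$ pays for. The $\eps$ is not spent in the union bound, as you claim: separation of constant proportion would already beat that bound with $O(\log Q)$ primes.

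\textbf{The large-conductor count.} This is also not right as written. A total of $Q^{O(1/C)}$ bad conductors up to $Q$ does not give $\sum_{d\ \mathrm{bad},\,d>(\log Q)^A}Q/d\ll Q(\log Q)^{-A}$. You need the density theorem on each dyadic block, $\#\{\text{bad } d\in(Z,2Z]\}\ll(Z^2P^{\sigma})^{3\sigma}(\log ZP)^{B_0}$, which gives $\ll QY^{-1+6\sigma}P^{3\sigma^2}(\log QP)^{B_0}$ excluded moduli.

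Because of the factor $(\log QP)^{B_0}$, the cut-off must be $Y=(\log Q)^K$ with $K$ large in terms of $A$ and $B_0$, not $(\log Q)^{A}$. The small range then forces $P\ge Y^{L_0}$, so $P^{3\sigma^2}$ grows with $K$. The paper breaks this circularity by fixing $\sigma$ with $6\sigma+3L_0\sigma^2\le 1/4$ before choosing $K$ and then $C=L_0K$.

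\textbf{Equality versus containment.} The theorem asks for equality of sets, so you must also exclude moduli divisible by some $p\in\cP$, since a subset product containing such a $p$ is not a unit. Your remark that these primes are negligible only concerns the character sums. This is easily repaired, since $\sum_{p\in\cP}Q/p\le Q|\cP|/P$ when $\cP\subseteq(P,2P]$.
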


In particular, taking $A=1$ gives an exceptional set $O_\eps(Q/\log Q)$ while keeping the primes below a fixed power $(\log Q)^{2C_{*}}$, independent of $\eps$, which could be calculated from the proof.

In the proof, we include among the exceptional moduli those divisible by at least one of the chosen primes.  This harmless step is necessary to have equality as opposed to containment, since a subset product involving a prime divisor of $q$ is not a unit modulo $q$.

\subsection{Optimality of the result.} The exceptional set in Theorem~\ref{thm_generation} is unavoidable with present technology, even if one requires the set in~\eqref{eq:Pset} cover $(\mathbb{Z}/q\mathbb{Z})^{\times}$ instead of being equal to it. In fact, one cannot obtain a better than super-polylogarithmic saving for the size of the exceptional set without settling Vinogradov's conjecture on the least quadratic nonresidue. The negation of Vinogradov's conjecture states that there exists $\eta>0$ such that there are infinitely many primes $p$ for which all the integers in $[1,p^{\eta}]$ are quadratic residues modulo $p$. Supposing that this holds for some $\eta\in (0,1)$, and fixing some $A$ in the theorem statement, we can choose arbitrarily large values of $Q\geq 3$ so that there exists a prime $q_0\in [(\log Q)^{\eta^{-1}C_*(A+1)}, 2(\log Q)^{\eta^{-1}C_*(A+1)}]$ for which all the primes up to $q_0^{\eta}$ are quadratic residues modulo $q_0$. Now, for every positive integer $q$ that is a multiple of $q_0$, there exists at least one unit $a$ modulo $q$ whose reduction modulo $q_0$ is a quadratic nonresidue. Hence, this unit $a\bmod q$ cannot be represented as a product of primes from $[1, (\log Q)^{C_*(A+1)}]\subseteq  [1,q_0^{\eta}]$ (even if one uses any nonnegative exponents). Hence, there are at least $c Q/(\log Q)^{\eta^{-1}C_*(A+1)}$ exceptional positive integers $q\leq Q$ with some constant $c > 0$.

\subsection{Proof sketch}

Fix $\eps\in(0,1)$ and $A>0$.  We set
\begin{equation}\label{eq: Y P I}
Y=(\log Q)^K, \qquad P=(\log Q)^C,
\qquad \cI=\{p\leq P\colon p\text{ prime}\},
\end{equation}
where $c_1(A+1) \le K \le c_2(A+1)$, for some absolute constants $c_2\ge c_1 > 0$ and $C=L_0K\ll A+1$, with $L_0$ an admissible exponent in Linnik's theorem.  Each prime in $\cI$ is selected independently with probability
$$
\frac{0.99(\log Q)^{1+\eps}}{|\cI|};
$$
the resulting random set is denoted by $\cP_Q$.  A Chernoff bound gives $|\cP_Q|\leq (\log Q)^{1+\eps}$ with probability $1+o(1)$. 
 We also record the high probability estimate
$$
\sum_{p\in\cP_Q}\frac{1}{p}\leq (\log Q)^{-A},
$$
which ensures that excluding moduli divisible by one of the chosen primes incurs an exceptional set of size only $ O_{\eps,A}(Q(\log Q)^{-A})$.

The main number-theoretic input is Lemma~\ref{lem:ambient}.  It says that, outside a set of $O_{\eps,A}(Q(\log Q)^{-A})$ moduli $q\leq Q$, every primitive character $\chi^*$ inducing a non-principal character modulo such a $q$ satisfies the separation bound
\begin{equation}\label{eq:intro-separation}
D_{\cI}(\chi^*)
\coloneqq \sum_{p\leq P}(1-\re\chi^*(p))
\geq (\log Q)^{-\eps/2}|\cI|.
\end{equation}
For conductors $d\leq Y$, this follows from a quantitative form of Linnik's theorem: for every $\eta>0$ and every reduced residue class $a\bmod d$, one has
$$
\pi(x;d,a)\gg_\eta \frac{x}{d^{1+\eta}\log x}\qquad \text{for } x\geq d^{L_0},
$$
with an absolute Linnik exponent $L_0$.  Summing this lower bound over all reduced residue classes $a\bmod d$, weighted by $1-\re\chi^*(a)$, gives $D_{\cI}(\chi^*)\gg_\eta |\cI|d^{-\eta}$ after slightly changing $\eta$.  With $d\leq Y=(\log Q)^K$ and $\eta$ sufficiently small in terms of $\eps/K$, this implies~\eqref{eq:intro-separation}.

For conductors $d>Y$, we use the grand density theorem, as stated in~\cite[Theorem~10.4]{IwKow}.  If $L(s,\chi^*)$ is zero-free in
$$
\textnormal{Re}(s)>1-\sigma, \qquad |\textnormal{Im}(s)|\leq P^\sigma,
$$
then the explicit formula and partial summation give, for some constant $B_0>0$, the bound
$$
\left|\sum_{p\leq P}\chi^*(p)\right|
 \ll P^{1-\sigma}(\log QP)^{B_0}=o(|\cI|),
$$
provided $C\sigma$ is sufficiently large.  Hence $D_{\cI}(\chi^*)\geq |\cI|/2$.  The remaining exceptional moduli are those divisible by a conductor $d>Y$ for which some primitive character modulo $d$ has a zero in the above rectangle.  The grand density theorem shows that the number of such moduli $2 \le q\leq Q$ is at most
$$
QY^{-1+O(\sigma)}P^{O(\sigma^2)}(\log QP)^{O(1)},
$$
which is $O_{\eps,A}(Q(\log Q)^{-A})$ by choosing $\sigma$ fixed and sufficiently small and then choosing $K,C$ in~\eqref{eq: Y P I} suitably large. 

Finally, we transfer~\eqref{eq:intro-separation} to the random subset.  For a fixed pair $(q,\chi)$, with $q$ outside the exceptional set, write $\chi^*$ for the primitive character inducing $\chi$ and let $B_p$ be the indicator of the event $p\in\cP_Q$.  Then
$$
X_{q,\chi}=\sum_{p\leq P}(1-\re\chi^*(p))B_p
$$
is a sum of independent random variables in $[0,2]$, and~\eqref{eq:intro-separation} gives
$$
\E X_{q,\chi}\geq 0.99(\log Q)^{1+\eps/2}.
$$
A lower tail estimate, followed by a union bound over the $O(Q^2)$ pairs $(q,\chi)$, shows that with high probability we have
\begin{equation}\label{eq:intro-random}
D_{\cP_Q}(\chi)
=\sum_{p\in\cP_Q}(1-\re\chi(p))
\geq 20\log Q
\end{equation}
for every non-principal character $\chi\bmod q$, after also excluding moduli divisible by any selected prime.  Character orthogonality then gives, for $a\in(\Z/q\Z)^\times$,
$$
T_q(a)\coloneqq \sum_{(b_p)\in\{0,1\}^{\cP_Q}}
\1_{\prod_{p\in\cP_Q}p^{b_p}\equiv a\bmod q}=\frac{1}{\phi(q)}\sum_{\chi\bmod q}\overline\chi(a)
\prod_{p\in\cP_Q}(1+\chi(p)).
$$
The principal character contributes $2^{|\cP_Q|}/\phi(q)\geq 2^{|\cP_Q|}/Q$.  
For every non-principal $\chi$, we see that~\eqref{eq:intro-random} and the arithmetic-geometric mean inequality give
$$
\left|\prod_{p\in\cP_Q}(1+\chi(p))\right|
\leq 2^{|\cP_Q|}\exp\left(-\frac{1}{4}D_{\cP_Q}(\chi)\right)
\leq 2^{|\cP_Q|}Q^{-5}.
$$
Thus the total non-principal contribution is smaller than the principal contribution, so $T_q(a)>0$ for every unit $a\bmod q$.

\subsection{Notation and conventions}
\label{sec:not}
From this point on, the constants implicit in the symbols $O$, $o$, $\ll$ and $\gg$ may depend on the fixed parameters $\eps$, $\eta$ and $A$, 
so we do not indicate this in subscripts anymore. 
%%other parameters are on which they may depend are indicated by in the subscripts.

For a finite set $S$, we write $|S|$ for its cardinality.

The letter $p$, with or without indices, always denotes a prime number. 

Let $\chi_0$ denote the principal character.  We write $\cond\chi$ for the conductor of a Dirichlet character $\chi$.  As usual, $\phi(q)$ is Euler's totient function.  For a real number $x\geq 2$, $\pi(x)$ denotes the number of primes up to $x$, and $\pi(x;q,a)$ denotes the number of primes $p\leq x$ with $p\equiv a\bmod q$.

\subsection{Acknowledgments}
The authors would like to thank Oded Regev for attracting their attention to this problem. 

During the preparation of this work
I.S.\ was  supported by the Australian Research Council Grants  DP230100530 and DP230100534 and by a
Knut and Alice Wallenberg Fellowship. 
J.T.\ was supported by funding from the European Union's Horizon Europe research and innovation programme under ERC grant agreement No.~101162746 and Marie Sk\l{}odowska-Curie grant agreement No.~101058904.

This work started while the authors were visiting   Institut Mittag-Leffler, Sweden,
during the programme `Analytic Number Theory' in January--April of 2024,
whose hospitality and support are gratefully acknowledged.

\section{Prime sums and separation bounds}\label{sec:prime-sums}

In this section we establish separation bounds, that is, lower bounds on the sum $\sum_{p\leq P}(1-\textnormal{Re}(\chi(p)))$ where $\chi$ is a character modulo $q$. For characters of small conductor, the input needed is a quantitative form of Linnik's theorem. 

\begin{lemma}[Quantitative Linnik theorem]\label{lem:linnik}
There is an absolute constant $L_0\geq 1$ such that, for every $\eta>0$, there is a constant $c_\eta>0$ for which
\begin{equation}\label{eq:linnik}
\pi(x;d,a)\geq c_\eta\frac{x}{d^{1+\eta}\log x}
\end{equation}
whenever $\gcd(a,d)=1$ and $x\geq d^{L_0}$.
\end{lemma}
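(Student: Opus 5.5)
This lower bound is essentially a quantitative form of Linnik's theorem known in the literature. Heath-Brown's treatment gives $\pi(x;d,a)\gg x/(\phi(d)\sqrt d\log x)$ for $x\ge d^{L}$. The refined versions, via the log-free zero density estimate and the Deuring--Heilbronn phenomenon, give $\pi(x;d,a)\gg x/(\phi(d)\,d^{\eta}\log x)$ for $x\geq d^{L_0}$, with $L_0$ absolute. The plan is to invoke such a result (for instance \cite[Chapter~18]{IwKow}, Corollary~18.8 / Theorem~18.6 there) and then convert $\phi(d)$ into $d$ using $\phi(d)\le d$. I nevertheless sketch the standard route so the uniformity in $\eta$ is transparent.

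The first step is to write
$$
\psi(x;d,a)=\frac{1}{\phi(d)}\sum_{\chi\bmod d}\overline\chi(a)\psi(x,\chi)
$$
and use a smoothed explicit formula. This yields
$$
\phi(d)\psi(x;d,a)=x-\beta_1\text{-term}-\sum_{\chi}\sum_{\rho}\frac{x^{\rho}}{\rho}(\ldots)+\text{error}.
$$
The sum runs over zeros $\rho=\beta+i\gamma$ with $|\gamma|\le d^{c}$, say, and the term $-\chi_1(a)x^{\beta_1}/\beta_1$ comes from a possible Siegel zero $\beta_1$ of a real character $\chi_1$.

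The second step bounds the zero sum by combining three inputs.
\begin{itemize}
\item The log-free density estimate $N^*(\alpha,T)\ll (dT)^{c(1-\alpha)}$, summed over all characters mod $d$.
\item The zero-free region $\beta\le 1-c/\log(dT)$, apart from $\beta_1$.
\item The Deuring--Heilbronn repulsion: if $\beta_1=1-\delta$ exists, every other zero satisfies $\beta\le 1-c\log(1/(\delta\log d))/\log d$.
\end{itemize}
With $x\ge d^{L_0}$ and $L_0$ large, the zero sum is then at most $x(1-c')$ in the non-exceptional case. In the exceptional case it is at most $x\,\delta\log d\cdot O(1)$, which is dominated by $x-x^{\beta_1}/\beta_1\asymp x\delta\log x$ once $L_0$ is large.

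This gives $\phi(d)\psi(x;d,a)\gg x\min(1,\delta\log d)$. Siegel's theorem, which is ineffective and is the source of the dependence of $c_\eta$ on $\eta$, supplies $\delta\gg_\eta d^{-\eta}$. Hence $\psi(x;d,a)\gg_\eta x/(d^{1+\eta})$ uniformly for $x\ge d^{L_0}$.

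Finally, prime powers contribute $O(\sqrt x\log x)$, which is negligible against $x/d^{1+\eta}$ once $L_0\ge 4$, say. Partial summation then converts the $\psi$ bound into the stated lower bound for $\pi(x;d,a)$.

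The main obstacle is the Deuring--Heilbronn step, and the need to keep $L_0$ absolute while all $\eta$-dependence is pushed into $c_\eta$ through Siegel's theorem. Rather than re-derive it, I would cite it directly from the literature.
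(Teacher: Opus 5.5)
Your proposal is correct and follows essentially the same route as the paper, which simply cites \cite[Theorem~18.7]{IwKow}, combines it with Siegel's bound $1-\beta\gg_\eta d^{-\eta}$ for a possible exceptional real zero, and passes from $\psi$ to $\pi$ by bounding higher prime powers trivially and using $\Lambda(n)\le\log x$. Your sketch of the explicit formula, log-free density and Deuring--Heilbronn argument just unpacks that cited theorem, and the last step needs only the trivial inequality $\pi(x;d,a)\log x\ge\vartheta(x;d,a)$, not partial summation.
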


\begin{proof}
This follows from~\cite[Theorem~18.7]{IwKow} (where we use $\Lambda(n)\leq \log x$ for $n\leq x$ and bound the contribution of higher prime powers trivially), combined with Siegel's bound $1-\beta\gg  d^{-\eta}$ for possible real zeros of Dirichlet $L$-functions modulo $d$.
\end{proof}

\begin{remark}
The constant $c_{\eta}$ in Linnik's theorem is ineffective, and hence the exceptional bound in Theorem~\ref{thm_generation} is also ineffective. However, for $\eta=1/2$ the constant $c_{\eta}$ in Linnik's theorem is effective, and a slight modification of the arguments (keeping the contribution of the exceptional zero in Linnik's theorem) would allow one to prove Theorem~\ref{thm_generation} using only this value of $\eta$, hence making the result effective. We leave the details to the interested reader.
\end{remark} 

\begin{lemma}[Small conductor separation]\label{lem:linnik-sep}
For every $\eta>0$ there is a constant $\widetilde c_\eta>0$ such that the following holds.  Let $\chi$ be a primitive non-principal character of conductor $d$, and let $x\geq d^{L_0}$.  Then
\begin{equation}\label{eq:linnik-sep}
\sum_{p\leq x}(1-\re\chi(p))\geq \widetilde  c_\eta\,\pi(x)d^{-\eta}.
\end{equation} 
\end{lemma}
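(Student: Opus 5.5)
We group the primes $p\le x$ by their residue class modulo $d$ and apply Lemma~\ref{lem:linnik} to each class. The summand $1-\re\chi(p)$ is nonnegative, so primes dividing $d$ can be dropped. This gives
$$
\sum_{p\leq x}(1-\re\chi(p))\ \geq \sum_{\substack{a \bmod d\\ \gcd(a,d)=1}}(1-\re\chi(a))\,\pi(x;d,a)\ \geq\ c_\eta\frac{x}{d^{1+\eta}\log x}\sum_{\substack{a\bmod d\\ \gcd(a,d)=1}}(1-\re\chi(a)).
$$
The lower bound from Lemma~\ref{lem:linnik} is legitimate here because every weight $1-\re\chi(a)$ is nonnegative and $x\ge d^{L_0}$.

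Next we evaluate the inner sum by orthogonality. Since $\chi$ is non-principal modulo $d$, we have $\sum_{a}\chi(a)=0$ over reduced residues. Hence
$$
\sum_{\substack{a\bmod d\\ \gcd(a,d)=1}}(1-\re\chi(a))=\phi(d).
$$
Combining the two displays yields
$$
\sum_{p\le x}(1-\re\chi(p))\ge c_\eta \frac{\phi(d)}{d}\cdot\frac{x}{d^{\eta}\log x}.
$$

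It remains to remove the factor $\phi(d)/d$ and compare $x/\log x$ with $\pi(x)$. For the first, we use the classical bound $\phi(d)/d\gg 1/\log\log(d+2)\gg_\eta d^{-\eta}$. For the second, Chebyshev's estimate gives $x/\log x\gg \pi(x)$, valid for $x\ge 2$; note $d\ge 3$ since $\chi$ is primitive and non-principal, so $x\ge d^{L_0}\ge 3$. Together these give the bound $\gg_\eta \pi(x)d^{-2\eta}$.

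Finally, we apply the argument with $\eta/2$ in place of $\eta$ to obtain \eqref{eq:linnik-sep} with some constant $\widetilde c_\eta>0$. No step is a serious obstacle. The only points needing care are the nonnegativity of the weights, which justifies using the lower bound class by class, and absorbing the $\phi(d)/d$ loss into the $d^{-\eta}$ factor.
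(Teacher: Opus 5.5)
Your proposal is correct and follows the paper's argument. You drop the primes dividing $d$ by nonnegativity, apply Lemma~\ref{lem:linnik} class by class with weights $1-\re\chi(a)$ summing to $\phi(d)$, and absorb $\phi(d)/d\gg 1/\log\log(d+2)$ and $x/\log x\gg\pi(x)$ into the constant; the only cosmetic difference is that the paper applies Lemma~\ref{lem:linnik} with $\eta/2$ from the start rather than rescaling $\eta$ at the end.
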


\begin{proof}
Apply Lemma~\ref{lem:linnik} with parameter $\eta/2$.  Since $\chi$ is non-principal,
$$
\sum_{a\in(\Z/d\Z)^\times}\chi(a)=0,
\qquad
\sum_{a\in(\Z/d\Z)^\times}(1-\re\chi(a))=\phi(d).
$$
The primes dividing $d$ give non-negative contributions to the left-hand side of~\eqref{eq:linnik-sep}, and therefore
$$
\begin{aligned}
\sum_{p\leq x}(1-\re\chi(p))
& \geq \sum_{a\in(\Z/d\Z)^\times}(1-\re\chi(a))\pi(x;d,a) \\
& \geq c_{\eta/2}\frac{x}{d^{1+\eta/2}\log x}\phi(d).
\end{aligned}
$$
Using the elementary bound $\phi(d)\gg d/\log\log (d+2)$, see~\cite[Theorem~2.9]{Mont-Vaughan}, 
and the prime number theorem in the very crude form $\pi(x)\ll  x/\log x$, we obtain~\eqref{eq:linnik-sep}.
\end{proof}

The input we need for separation bounds for characters of large conductor  is Lemma~\ref{lem:density} below, 
which gives us a strong bound on prime character sums outside a small exceptional set of moduli. 
To establish this, we first need a result giving a strong character sum bound assuming a suitable zero-free region. 
This result is certainly well-known, and is give, for example, by Pilatte~\cite[Proposition~2.9]{pilatte}; we give a different 
proof for the sake of completeness.

\begin{lemma}[Character sum bound from a zero-free rectangle]\label{lem:explicit-formula}
Let $0<\sigma\leq 1/7$, let $P\geq 3$, and let $\chi$ be a primitive non-principal character of conductor at most $Q$.  Suppose that $L(s,\chi)$ has no zeros in the rectangle
\begin{equation}\label{eq:zero-free-rectangle}
\textnormal{Re}(s)\geq 1-\sigma,
\qquad |\textnormal{Im}(s)|\leq P^\sigma.
\end{equation}
Then
\[
\left|\sum_{p\leq P}\chi(p)\right|
\ll P^{1-\sigma}(\log QP)^3. 
\] 
\end{lemma}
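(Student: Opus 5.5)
We plan to pass through the von Mangoldt sum $\psi(x,\chi)=\sum_{n\leq x}\Lambda(n)\chi(n)$ and apply the truncated explicit formula. For primitive non-principal $\chi$ of conductor $d\leq Q$, $2\leq x\leq P$ and $2\leq T\leq x$, the standard truncated explicit formula (e.g.\ \cite[Proposition~5.25]{IwKow}) states
\begin{equation*}
\psi(x,\chi)=-\sum_{\substack{\rho\\ |\gamma|\leq T}}\frac{x^\rho}{\rho}+O\Bigl(\frac{x}{T}(\log dxT)^2+\log x\Bigr).
\end{equation*}
Here $\rho=\beta+i\gamma$ runs over the nontrivial zeros of $L(s,\chi)$; possible zeros near $0$ are absorbed into the error term for primitive $\chi$. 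We take $T=P^\sigma$, which does not exceed $x$ once $x\geq P^\sigma$. Smaller $x$ are handled trivially, since $|\psi(x,\chi)|\leq \psi(x)\ll x$.

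For the zero sum we use the hypothesis. Every zero with $|\gamma|\leq T=P^\sigma$ has $\beta<1-\sigma$, so $|x^\rho|\leq x^{1-\sigma}$. For the weights $1/|\rho|$, recall that the number of zeros with $|\gamma-t|\leq 1$ is $\ll\log(d(|t|+2))$. Summing dyadically over heights therefore gives $\sum_{|\gamma|\leq T}1/|\rho|\ll(\log dT)^2$. Zeros with $\gamma$ very close to $0$ and $\beta$ close to $0$ are the delicate ones; for primitive $\chi$ they are controlled by the functional equation, which places zeros symmetrically about the line $1/2$. We bound them by $O(\log d)$ using the version of the explicit formula with the $b(\chi)$ term, or simply cite the standard form where this is built in. Altogether
\begin{equation*}
|\psi(x,\chi)|\ll x^{1-\sigma}(\log QP)^2+\frac{x}{P^\sigma}(\log QP)^2\ll P^{1-\sigma}(\log QP)^2
\end{equation*}
uniformly for $x\leq P$.

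Next we pass to primes. The prime powers $p^k$ with $k\geq2$ contribute $O(P^{1/2}\log P)$, and $P^{1/2}\leq P^{1-\sigma}$ because $\sigma\leq 1/7$. Hence $\theta(x,\chi)=\sum_{p\leq x}\chi(p)\log p$ obeys the same bound for all $x\leq P$. Partial summation then gives
\begin{equation*}
\sum_{p\leq P}\chi(p)=\frac{\theta(P,\chi)}{\log P}+\int_2^P\frac{\theta(t,\chi)}{t\log^2t}\,dt.
\end{equation*}
The integral is $\ll P^{1-\sigma}(\log QP)^2\int_2^P dt/(t\log^2 t)\ll P^{1-\sigma}(\log QP)^2$. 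This yields the claim, with a spare factor of $\log$; the stated exponent $3$ leaves room for crude bounding.

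The main obstacle is the bookkeeping in the explicit formula: handling the error term uniformly in $d\leq Q$, and treating zeros near $s=0$ and $\gamma\approx 0$. We will rely on the cited standard truncated formula for primitive characters rather than re-derive it.
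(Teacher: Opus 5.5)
Your proof is correct, but its key analytic step differs from the paper's. The paper never sums over zeros. It applies Perron's formula with $T=P^\sigma/2$ and moves the line of integration only to $\re(s)=1-\sigma+1/\log(QP)$. On that shifted contour it bounds $L'(s,\chi)/L(s,\chi)\ll(\log QP)^2$ pointwise, which works because all relevant zeros lie at horizontal distance at least $1/\log(QP)$ to the left of it. Integrating $x^{1-\sigma}/|s|$ over $|\mathrm{Im}(s)|\le T$ then costs an extra $\log T$, which is where the exponent $3$ comes from.

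You instead invoke the truncated explicit formula and bound each zero via $|x^\rho|\le x^{1-\sigma}$ plus zero counting. This gains a logarithm, giving $(\log QP)^2$. The price is that you must handle zeros with small $|\rho|$ and the constant $b(\chi)$, which the paper's contour, staying near the $1$-line, never meets.

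On that point your displayed formula is slightly off. With $x^\rho/\rho$ and error $O(\log x)$ it fails for general primitive $\chi$, since $b(\chi)=-\sum_{|\gamma|<1}1/\rho+O(\log d)$ can be large. Pairing $-b(\chi)$ with the low zeros gives $-\sum_{|\gamma|<1}(x^\rho-1)/\rho+O(\log d)$. Since $|(x^\rho-1)/\rho|\le\int_1^x t^{\beta-1}\,dt\le x^{1-\sigma}\log x$, these zeros contribute $O(x^{1-\sigma}\log x\log d)$ rather than $O(\log d)$. Your final bound still absorbs this.

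Alternatively, as you indicate, the functional equation shows the hypothesis also excludes zeros with $\re\rho\le\sigma$ and $|\gamma|\le P^\sigma$. Then $|\rho|>\sigma$ and the naive formula holds with a $\sigma$-dependent constant. That is harmless here, since $\sigma$ is fixed in the application.
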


\begin{proof}
Set $T=P^\sigma/2\geq 1/2$ and 
$$
\psi(x,\chi)=\sum_{n\leq x}\chi(n) \Lambda(n),\quad \vartheta(x,\chi)=\sum_{p\leq x}\chi(p) \log p.
$$
By Perron's formula, uniformly for $2\leq x\leq P$ we have
$$
\psi(x,\chi)=-\frac{1}{2\pi i}\int_{1+1/\log x-iT}^{1+1/\log x+iT}\frac{L'(s,\chi)}{L(s,\chi)}\frac{x^s}{s}\,\textnormal{d}s+O\left(\frac{x(\log QP)^2}{T}+\log(QP)\right).
$$
We shift the line of integration to $\textnormal{Re}(s)=1-\sigma+1/\log(QP)$. Since there are no poles of $L'(s,\chi)/L(s,\chi)$ in the rectangle $\textnormal{Re}(s)\geq  1-\sigma$, $|\textnormal{Im}(s)|\leq 2T$, a standard pointwise bound (see~\cite[Lemma 11.1]{Mont-Vaughan}) gives $|L'(s,\chi)/L(s,\chi)|\ll (\log QP)^2$ on the boundary of the rectangle with vertices $1+1/\log x\pm iT, 1-\sigma+1/\log(PQ)\pm iT$. Hence, by the residue theorem and the fact that 
$$
\int_{-T}^{T}\frac{1}{\left|1-\sigma+1/\log(QP)+iy\right|}x^{1-\sigma}\,\textnormal{d}y\ll x^{1-\sigma}\log(3T),\quad \int_{1-\sigma+1/\log(QP)}^{1+1/\log x} \frac{x^{u}}{T}\,\textnormal{du}\ll \frac{x}{T}
$$
we obtain for $2\leq x\leq P$ the bound
$$
|\psi(x,\chi)|\ll P^{1-\sigma}(\log QP)^3.
$$
Removing prime powers from $\psi(x,\chi)$ changes the sum by
$O(P^{1/2}(\log P)^2)$,
so
\begin{equation}\label{eq:theta-bound}
|\vartheta(x,\chi)|\ll P^{1-\sigma}(\log QP)^3    
\end{equation}
since $\sigma\leq 1/2$.  
Now, the result follows from the partial summation identity
$$
\sum_{p\leq P}\chi(p)=\frac{\vartheta(P,\chi)}{\log P}+\int_{2}^{P}\frac{\vartheta(u,\chi)}{u(\log u)^2} \,\textnormal{d}u
$$
combined with~\eqref{eq:theta-bound}. 
\end{proof}

\begin{lemma}[Grand density exceptional set]\label{lem:density}
There is an absolute constant $B_0\geq 3$ such that the following holds.  Let $Q\geq 3$, $P\geq 3$, $1\leq Y\leq Q$, and $0<\sigma\leq 1/7$.  Then there is a set $\cE_0\subseteq[1,Q]$ with
\begin{equation}\label{eq:density-exceptional}
  |\cE_0|\ll QY^{-1+6\sigma}P^{3\sigma^2}(\log QP)^{B_0}
\end{equation}
such that, for every $q\leq Q$, $q\notin\cE_0$, and every character $\chi\bmod q$ with $\cond\chi>Y$, if $\chi^*$ is the primitive character inducing $\chi$, then
\begin{equation}\label{eq:density-prime-sum}
 \left|\sum_{p\leq P}\chi^*(p)\right|
 \ll P^{1-\sigma}(\log QP)^{B_0}.
\end{equation}
\end{lemma}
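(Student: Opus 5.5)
Define $\cE_0$ as the set of $q\leq Q$ divisible by the conductor $d>Y$ of some primitive character $\chi^*$ whose $L$-function has a zero in the rectangle \eqref{eq:zero-free-rectangle}. For $q\notin\cE_0$ and $\chi\bmod q$ with $\cond\chi>Y$, the inducing $\chi^*$ has conductor $d\mid q$, so $d\leq Q$ and $d>Y$. Hence $L(s,\chi^*)$ is zero-free in the rectangle, and Lemma~\ref{lem:explicit-formula} gives \eqref{eq:density-prime-sum} with exponent $3\le B_0$. So the content of the lemma is the bound \eqref{eq:density-exceptional} on $|\cE_0|$.

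To count, I bound $|\cE_0|\leq \sum_{d}\lfloor Q/d\rfloor\,\1_d$, where $\1_d$ indicates that some primitive $\chi$ mod $d$ has a zero in the rectangle. Split $d\in(Y,Q]$ dyadically into ranges $D<d\leq 2D$. In each range the contribution is at most
$$
\frac{Q}{D}\,\#\{d\sim D\colon \1_d=1\}\leq \frac{Q}{D}\sum_{d\sim D}\;\sideset{}{^*}\sum_{\chi\bmod d} N(1-\sigma,P^\sigma,\chi),
$$
where $N(\alpha,T,\chi)$ counts zeros with real part at least $\alpha$ and $|\gamma|\leq T$. Now apply the grand density theorem \cite[Theorem~10.4]{IwKow}:
$$
\sum_{d\leq D'}\;\sideset{}{^*}\sum_{\chi\bmod d}N(\alpha,T,\chi)\ll (D'^2T)^{3(1-\alpha)/(2-\alpha)}(\log D'T)^{B}.
$$
With $1-\alpha=\sigma$ the exponent is $3\sigma/(1+\sigma)\leq 3\sigma$, so the bound is $\ll D'^{6\sigma}T^{3\sigma}(\log)^{B}$. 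For $T=P^\sigma$ this is $D'^{6\sigma}P^{3\sigma^2}(\log QP)^{B}$.

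Multiplying by $Q/D$ with $D'=2D$ gives $Q D^{-1+6\sigma}P^{3\sigma^2}(\log QP)^B$. Since $\sigma\leq1/7$, the exponent $-1+6\sigma$ is negative, so the sum over dyadic $D\geq Y$ is dominated by $D\asymp Y$, up to an extra $\log Q$ from the number of ranges. This yields \eqref{eq:density-exceptional} with $B_0=\max(3,B+1)$. When $Y$ is small the bound is trivial anyway.

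The main obstacle is quoting the grand density theorem in the right form: the exact exponent in Iwaniec--Kowalski, and the fact that $T\geq1$ there whereas here $T=P^\sigma$ may be less than $1$. I would handle the latter by replacing $T$ by $\max(T,1)$, which only costs a harmless constant since $P^{3\sigma^2}\geq1$. I would also check that $3(1-\alpha)/(2-\alpha)\le 3\sigma$ is used in the correct direction, which it is because the base $D^2T$ is at least $1$.
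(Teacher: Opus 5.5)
Your proposal is correct and follows essentially the same route as the paper: you put into $\cE_0$ the multiples of conductors $d>Y$ having a primitive character with a zero in the rectangle, count them dyadically via the grand density theorem (your reduction $3\sigma/(1+\sigma)\le 3\sigma$ gives exactly the paper's form $(D^2T)^{3\sigma}$), and conclude with Lemma~\ref{lem:explicit-formula}. Two minor remarks: $T=P^\sigma>1$ automatically since $P\ge 3$ and $\sigma>0$, and because $-1+6\sigma\le -1/7$ the dyadic sum is a convergent geometric series, so the extra factor $\log Q$ you allow is harmless but unnecessary.
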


\begin{proof}
We combine Lemma~\ref{lem:explicit-formula} with the grand density theorem for Dirichlet $L$-functions; see~\cite[Theorem~10.4]{IwKow}.  If $N(\alpha,T,\chi)$ denotes the number of zeros $\rho=\beta+i\gamma$ of $L(s,\chi)$ with $\beta\geq \alpha$ and $|\gamma|\leq T$, counted with multiplicity, set
$$
N(\alpha,T,D)=\sum_{d\leq D}\,\,\sideset{}{^*}\sum_{\chi\bmod d}N(\alpha,T,\chi),
$$
where $\Sigma^*$ denotes summation over primitive characters.  The grand density theorem gives, for $0<\sigma\leq 1/2$ and $T\geq 1$,
\begin{equation}\label{eq:grand-density}
N(1-\sigma,T,D)
\ll (D^2T)^{3\sigma}(\log DT)^{B_0},
\end{equation}
with a suitable absolute constant $B_0\geq 3$.

Set $T=P^\sigma$.  For a dyadic interval $(Z,2Z]$, let $\cD(Z)$ be the set of conductors $d\in(Z,2Z]$ for which some primitive character modulo $d$ has a zero in the rectangle~\eqref{eq:zero-free-rectangle}.  
By~\eqref{eq:grand-density},
$$
|\cD(Z)|\ll (Z^2P^\sigma)^{3\sigma}(\log ZP)^{B_0}.
$$
We place in $\cE_0$ all moduli $q\leq Q$ divisible by at least one $d\in\cD(Z)$ for some dyadic $Z\geq Y/2$.  Since $-1+6\sigma<0$, the number of such moduli is
$$
\begin{aligned}
|\cE_0|
& \ll \sum_{\substack{Z\in[Y/2,Q]\\ Z=2^j}}
\sum_{d\in\cD(Z)}\frac{Q}{d}  \\
& \ll QP^{3\sigma^2}(\log QP)^{B_0}
\sum_{\substack{Z\in[Y/2,Q]\\ Z=2^j}} Z^{-1+6\sigma}  \\
& \ll QY^{-1+6\sigma}P^{3\sigma^2}(\log QP)^{B_0}.
\end{aligned}
$$
If $q\notin\cE_0$ and $\chi\bmod q$ has conductor $d>Y$, then $d$ is not in any of the sets $\cD(Z)$, so $L(s,\chi^*)$ is zero-free in~\eqref{eq:zero-free-rectangle}.  Then Lemma~\ref{lem:explicit-formula} gives~\eqref{eq:density-prime-sum}.
\end{proof}

We now deduce a separation bound by combining the small conductor estimate (Lemma~\ref{lem:linnik-sep}) with Lemma~\ref{lem:density} in the large  conductor case.

\begin{lemma}[Separation bound on the full prime segment]\label{lem:ambient}
Let $\eps\in(0,1)$ and $A>0$.  There is an absolute constant $C_*\geq 1$ such that, for all sufficiently large $Q$, the following holds with some $K=K(A)\geq 1$ and some $C=C(A)$ satisfying
$$
A+3\leq C\leq C_*(A+1).
$$
Set
$$
Y=(\log Q)^K,
\qquad P=(\log Q)^C,
\qquad \cI=\{p\leq P\colon p\text{ prime}\}.
$$
Then there is a set $\cE_0\subseteq[1,Q]$ with
$$
    |\cE_0|\ll  Q(\log Q)^{-A}
$$
such that the following holds. 
 If $q\leq Q$, $q\notin\cE_0$, and $\chi\bmod q$ is non-principal, then, writing $\chi^*$ for the primitive character inducing $\chi$, one has
\begin{equation}\label{eq:ambient-separation}
    \sum_{p\in\cI}(1-\re\chi^*(p))
    \geq (\log Q)^{-\eps/2}|\cI|.
\end{equation}
\end{lemma}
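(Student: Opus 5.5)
The plan is to split the non-principal characters by the size of the conductor $d=\cond\chi$ relative to $Y$. We treat $d\le Y$ with Lemma~\ref{lem:linnik-sep} and $d>Y$ with Lemma~\ref{lem:density}, and then choose $\sigma$, $K$ and $C$ so that both regimes give~\eqref{eq:ambient-separation} and the exceptional set is small. The exceptional set $\cE_0$ is taken to be exactly the one supplied by Lemma~\ref{lem:density}; the small-conductor range needs no exceptional moduli.

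\emph{Small conductors $d\le Y$.} We take $C=L_0K$, say $C=\max(L_0K,A+3)$, so that $P=(\log Q)^C\ge Y^{L_0}\ge d^{L_0}$. Lemma~\ref{lem:linnik-sep} then applies with $x=P$ and gives
$$
\sum_{p\le P}(1-\re\chi^*(p))\ge \widetilde c_\eta |\cI| d^{-\eta}\ge \widetilde c_\eta |\cI|(\log Q)^{-K\eta}.
$$
Choosing $\eta=\eps/(4K)$ gives $\widetilde c_\eta(\log Q)^{-\eps/4}\ge(\log Q)^{-\eps/2}$ for $Q$ large in terms of $\eps$ and $A$. The case $d=1$ does not arise, since $\chi$ is non-principal and therefore $\chi^*$ is non-principal.

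\emph{Large conductors $d>Y$.} Fix $\sigma=1/12$, so that $-1+6\sigma=-1/2$. Lemma~\ref{lem:density} gives
$$
|\cE_0|\ll Q(\log Q)^{-K/2+3C\sigma^2+O(B_0)} .
$$
For $q\notin\cE_0$ it also gives $|\sum_{p\le P}\chi^*(p)|\ll P^{1-\sigma}(\log Q)^{B_0}$, using $\log QP\ll\log Q$. Since $|\cI|\gg P/\log P$, this is $o(|\cI|)$ once $C\sigma>B_0+2$, that is, once $C>12(B_0+2)$. Then $\sum_{p\le P}(1-\re\chi^*(p))\ge |\cI|-|\sum_{p\le P}\chi^*(p)|\ge|\cI|/2$, which exceeds $(\log Q)^{-\eps/2}|\cI|$.

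\emph{Choice of parameters (the main obstacle).} The difficulty is the competition between $K$ and $C$. The exceptional-set bound needs $K/2-3C\sigma^2-B_0\ge A$, while the Linnik step forces $C\ge L_0K$, so the $P^{3\sigma^2}$ loss grows with $K$. With $C=L_0K$ the exponent is $K(1/2-3L_0\sigma^2)-B_0$. It therefore suffices to take $\sigma$ small in terms of $L_0$ alone, for instance $\sigma=\min(1/12,(12L_0)^{-1/2})$, so that the coefficient is at least $1/4$. We then take $K$ a suitable absolute multiple of $A+1$, large enough that $K/4\ge A+B_0$ and $L_0K\sigma>B_0+2$. Since $L_0$ and $B_0$ are absolute, this gives $A+3\le C=L_0K\le C_*(A+1)$ for an absolute $C_*$. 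With $\sigma$ fixed, the constants $\eta$ and $\widetilde c_\eta$ depend only on $\eps$ and $A$, as permitted. Together the two cases cover every non-principal $\chi\bmod q$ with $q\notin\cE_0$.
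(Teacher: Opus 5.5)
Your proposal is correct and matches the paper's proof step for step. You make the same split into $d\le Y$ and $d>Y$. For small conductors you use Lemma~\ref{lem:linnik-sep} with $\eta=\eps/(4K)$ and $C=L_0K$, and for large conductors you use Lemma~\ref{lem:density} with $\sigma$ fixed small in terms of $L_0$ and $K\asymp A+1$. The differences are cosmetic: your $\sigma=\min(1/12,(12L_0)^{-1/2})$ gives a saving of $K/4-B_0$ where the paper's condition $6\sigma+3L_0\sigma^2\le 1/4$ gives $3K/4-B_0$. Your provisional choices $\sigma=1/12$ and $C=\max(L_0K,A+3)$ are consistently replaced by the final ones, since $K\ge 4(A+B_0)$ already forces $L_0K\ge A+3$.
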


\begin{proof}
Let $L_0$ be as in Lemma~\ref{lem:linnik}, and let $B_0$ be as in Lemma~\ref{lem:density}.  Choose a fixed 
$\sigma \in (0,1/7)$  so small that
\begin{equation}\label{eq:sigma-choice}
6\sigma+3L_0\sigma^2\leq \frac{1}{4}.
\end{equation} 
Next choose $K$ so large in terms of $A$ and the absolute constants that
\begin{equation}\label{eq:K-choice}
L_0K\sigma\geq B_0+2,
\qquad L_0K\geq A+3,
\qquad \frac{3}{4}K-B_0\geq A+1.
\end{equation}
We may do this with $K\ll A+1$, where the implied constant is absolute.  Finally set $C=L_0K$, increasing the absolute constant $C_*$ if necessary so that $C\leq C_*(A+1)$.

Let $q$ and $\chi$ be as in the statement, and let $d=\cond\chi^*$.  Suppose first that $d\leq Y$.  Then $P\geq d^{L_0}$.  Apply Lemma~\ref{lem:linnik-sep} with $\eta=\eps/(4K)$.  Since $d\leq Y=(\log Q)^K$, we get
$$
\sum_{p\leq P}(1-\re\chi^*(p))
\gg  |\cI| d^{-\eta}
\geq c_{\eps,A}|\cI|(\log Q)^{-\eps/4}
\geq |\cI|(\log Q)^{-\eps/2}
$$
for all sufficiently large $Q$.

It remains to consider $d>Y$.  Let $\cE_0$ be the set given by Lemma~\ref{lem:density}.  From~\eqref{eq:density-exceptional}, \eqref{eq:sigma-choice}, \eqref{eq:K-choice} and $P=(\log Q)^{L_0K}$, we have
$$
\begin{aligned}
|\cE_0|
& \ll Q(\log Q)^{-K(1-6\sigma)+3L_0K\sigma^2+B_0+o(1)} \\
& \ll Q(\log Q)^{-A}.
\end{aligned}
$$
If $q\notin\cE_0$, Lemma~\ref{lem:density} gives
$$
\left|\sum_{p\leq P}\chi^*(p)\right|
\ll P^{1-\sigma}(\log QP)^{B_0}.
$$
By~\eqref{eq:K-choice}, $C\sigma=L_0K\sigma\geq B_0+2$, and hence this is $o(P/\log P)=o(|\cI|)$.  Therefore, for all sufficiently large $Q$,
$$
\sum_{p\leq P}(1-\re\chi^*(p))
\geq |\cI|-\left|\sum_{p\leq P}\chi^*(p)\right|
\geq \frac{|\cI|}{2}
\geq |\cI|(\log Q)^{-\eps/2}.
$$
This concludes the proof.
\end{proof}

\section{Character separation for random primes}

We now transfer the deterministic separation from the full prime segment to a sparse random subset of primes. For this, we first need some elementary tail probability estimates.

\begin{lemma}[Elementary probability estimates]\label{lem:probability}
Let $n\geq 1$, and let $X_1,\ldots,X_n$ be independent random variables satisfying $0\leq X_j\leq 2$.  Set $X=\sum_{j=1}^nX_j$ and $\mu=\E X$.  Then
\begin{equation}\label{eq:lower-tail}
\Prb(X\leq \mu/2)\leq \exp(-c\mu)
\end{equation}
for some absolute constant $c>0$.  Also, if $B_1,\ldots,B_n$ are independent Bernoulli random variables with common mean $\rho$ and $S=\sum_{j=1}^nB_j$, then, for $0<\delta\leq 1$,
\begin{equation}\label{eq:chernoff}
\Prb(S\geq (1+\delta)\rho n)
\leq \exp\left(-\frac{\delta^2\rho n}{3}\right).
\end{equation}
\end{lemma}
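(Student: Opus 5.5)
Both estimates are standard Chernoff--type bounds, and I will prove them via the exponential moment method (Markov's inequality applied to $e^{\pm tX}$), using independence to factor the moment generating function.

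For the lower tail \eqref{eq:lower-tail}, I first rescale to $Y_j=X_j/2\in[0,1]$, with $Y=X/2$ and $\nu=\E Y=\mu/2$. For $t>0$, Markov's inequality gives
$$
\Prb(Y\leq \nu/2)\leq e^{t\nu/2}\,\E e^{-tY}=e^{t\nu/2}\prod_{j=1}^n\E e^{-tY_j}.
$$
Since $y\mapsto e^{-ty}$ is convex on $[0,1]$, we have $e^{-ty}\leq 1-y(1-e^{-t})$. Hence
$$
\E e^{-tY_j}\leq 1-(1-e^{-t})\E Y_j\leq \exp\bigl(-(1-e^{-t})\E Y_j\bigr).
$$
Taking the product over $j$ yields $\Prb(Y\leq\nu/2)\leq \exp\bigl(-\nu(1-e^{-t}-t/2)\bigr)$. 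Choosing $t=\log 2$ makes the bracket equal to $1/2-(\log 2)/2>0.15$. This gives the bound $\exp(-c\mu)$ with, for instance, $c=0.07$.

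For \eqref{eq:chernoff}, the same convexity argument gives $\E e^{tB_j}=1+\rho(e^t-1)\leq \exp(\rho(e^t-1))$. Markov's inequality then gives
$$
\Prb(S\geq(1+\delta)\rho n)\leq \exp\bigl(\rho n(e^t-1-t(1+\delta))\bigr).
$$
With the optimal choice $t=\log(1+\delta)$, the right-hand side becomes $\exp\bigl(-\rho n[(1+\delta)\log(1+\delta)-\delta]\bigr)$. It then remains to verify the elementary inequality
$$
f(\delta)=(1+\delta)\log(1+\delta)-\delta-\delta^2/3\geq 0 \qquad \text{for } 0<\delta\leq 1.
$$
We have $f(0)=0$ and $f'(\delta)=\log(1+\delta)-2\delta/3$. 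Next, $f'(0)=0$ and $f''(\delta)=1/(1+\delta)-2/3$, which is positive for $\delta<1/2$ and negative afterwards. So $f'$ increases and then decreases on $[0,1]$. Since $f'(1)=\log 2-2/3>0$, this gives $f'\geq0$ on $[0,1]$, and hence $f\geq 0$.

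The only step needing care is this calculus verification; the rest is routine.
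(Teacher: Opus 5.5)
Your proposal is correct and follows essentially the same route as the paper: the exponential moment method with the chord bound for the convex function $e^{-tu}$ for the lower tail (your rescaling to $[0,1]$ with $t=\log 2$ amounts to the paper's argument with $t=(\log 2)/2$ in place of $t=1/4$), and the standard Chernoff computation for the Bernoulli sum. The only addition is that you explicitly verify the inequality $(1+\delta)\log(1+\delta)-\delta\geq\delta^2/3$ on $(0,1]$, which the paper takes as standard, and your calculus check of it is correct.
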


\begin{proof}
For $0\leq u\leq 2$ and $t>0$, by the convexity of $u\mapsto e^{-tu}$ and the inequality $1-y\leq e^{-y}$ we have
$$
e^{-tu}\leq 1-\frac{1-e^{-2t}}{2}u\leq \exp\left(-\frac{1-e^{-2t}}{2}u\right).
$$
Taking $t=1/4$ and using independence, Markov's inequality gives
$$
\begin{aligned}
\Prb(X\leq \mu/2)
& =\Prb(e^{-tX}\geq e^{-t\mu/2})           \\
& \leq e^{t\mu/2}\prod_{j=1}^n\E e^{-tX_j} \\
&\leq \exp\left(-\left(\frac{1-e^{-2t}}{2}-\frac{t}{2}\right)\mu\right).
\end{aligned}
$$
The coefficient in parentheses is positive, so~\eqref{eq:lower-tail} follows.  The Bernoulli estimate follows from the standard Chernoff calculation
$$
\Prb(S\geq (1+\delta)\rho n)
\leq \left(\frac{e^\delta}{(1+\delta)^{1+\delta}}\right)^{\rho n}
\leq \exp\left(-\frac{\delta^2\rho n}{3}\right),
$$
valid for $0<\delta\leq 1$.
\end{proof}

\begin{lemma}[Separation for random sets of primes]\label{lem:randomsep}
Let $\eps\in(0,1)$ and $A>0$, let $Q$ be large, and let $K,C$ be the constants given by Lemma~\ref{lem:ambient}.  Set
$$
P=(\log Q)^C,
\qquad \cI=\{p\leq P\colon p\text{ prime}\},
\qquad M=(\log Q)^{1+\eps}.
$$
Let $\cP_Q\subseteq\cI$ be the random set obtained by choosing each prime in $\cI$ independently with probability
$$
     \rho=\frac{0.99M}{|\cI|}.
$$
Then, with probability $1+o(1)$, the following two assertions hold:
\begin{enumerate}
    \item $|\cP_Q|\leq M$ and
          \begin{equation}\label{eq:reciprocal-small}
        \sum_{p\in\cP_Q}\frac{1}{p}\leq (\log Q)^{-A};
          \end{equation}
    \item there is a set $\cE_Q\subseteq[1,Q]$, $|\cE_Q|\ll Q(\log Q)^{-A}$, such that for every $q\leq Q$ with $q\notin\cE_Q$, and every non-principal character $\chi\bmod q$, writing $\chi^*$ for the primitive character inducing $\chi$, one has
          \begin{equation}\label{eq:random-separation}
            \sum_{p\in\cP_Q}(1-\re\chi^*(p))\geq 20\log Q.
          \end{equation}
\end{enumerate}
\end{lemma}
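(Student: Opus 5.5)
We treat the three events separately and combine them with a union bound, showing that each fails with probability $o(1)$.

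\emph{Size of $\cP_Q$.} Apply the Chernoff bound \eqref{eq:chernoff} to $S=|\cP_Q|$, with $n=|\cI|$ and $\rho n=0.99M$. Take $\delta=1/99$, so that $(1+\delta)\rho n=M$. The failure probability is then at most $\exp(-cM)=o(1)$.

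\emph{Reciprocal sum.} We use Markov's inequality. Since $|\cI|\gg P/\log P$ by Chebyshev's bound and $\sum_{p\le P}1/p\ll\log\log P$,
$$
\E\sum_{p\in\cP_Q}\frac1p=\rho\sum_{p\le P}\frac1p\ll \frac{M\log P\,\log\log P}{P}\ll (\log Q)^{1+\eps-C+o(1)}.
$$
Lemma~\ref{lem:ambient} gives $C\ge A+3$, so this is at most $(\log Q)^{-A-1/2}$. Markov's inequality then shows that \eqref{eq:reciprocal-small} fails with probability at most $(\log Q)^{-1/2}=o(1)$.

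\emph{Separation.} Take $\cE_Q=\cE_0$ from Lemma~\ref{lem:ambient}. This set is deterministic and does not depend on the randomness. Fix $q\le Q$ with $q\notin\cE_0$ and a non-principal $\chi\bmod q$, and write
$$
X_{q,\chi}=\sum_{p\in\cI}(1-\re\chi^*(p))B_p,
$$
where $B_p$ is the indicator of $p\in\cP_Q$. This is a sum of independent variables in $[0,2]$. By \eqref{eq:ambient-separation},
$$
\mu=\E X_{q,\chi}=\rho\sum_{p\in\cI}(1-\re\chi^*(p))\ge 0.99M(\log Q)^{-\eps/2}=0.99(\log Q)^{1+\eps/2}.
$$
For large $Q$ we have $\mu/2\ge 20\log Q$. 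The lower tail bound \eqref{eq:lower-tail} then gives
$$
\Prb(X_{q,\chi}<20\log Q)\le\exp\bigl(-c(\log Q)^{1+\eps/2}\bigr).
$$
There are at most $Q^2$ pairs $(q,\chi)$, and $Q^2\exp(-c(\log Q)^{1+\eps/2})=o(1)$, so a union bound shows that \eqref{eq:random-separation} holds simultaneously for all such pairs with probability $1-o(1)$.

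The only delicate point is the union bound in the separation step. It works because the mean $(\log Q)^{1+\eps/2}$ grows faster than $\log(Q^2)$, and it is exactly the loss $(\log Q)^{-\eps/2}$ permitted in Lemma~\ref{lem:ambient} that leaves this margin. Once that is seen, the remaining steps are routine.
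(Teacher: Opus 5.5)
Your proof is correct and follows the paper's argument essentially step by step: Chernoff with $\delta=1/99$ for $|\cP_Q|$, Markov's inequality for the reciprocal sum using $C\geq A+3$, and the lower-tail bound plus a union bound over at most $Q^2$ pairs with $\cE_Q=\cE_0$. The only extra point the paper records is that $\rho<1$ for large $Q$ (because $C\geq A+3>1+\eps$), so that the random model is well defined.
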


\begin{proof}
By Lemma~\ref{lem:ambient} we have $C\geq A+3\geq 2$.  Since $\eps<1$, the probability $\rho=0.99M/|\cI|$ is less than $1$ for all sufficiently large $Q$.  The random variable $|\cP_Q|$ has mean $0.99M$.  Applying~\eqref{eq:chernoff} with $\delta=1/99$ gives $|\cP_Q|\leq M$ with probability $1+o(1)$.

Next,
$$
\E\sum_{p\in\cP_Q}\frac{1}{p}
=\rho\sum_{p\leq P}\frac{1}{p}
\ll \frac{M\log P\log\log P}{P}
\ll  (\log Q)^{-A-1/2},
$$
by the choice $C\geq A+3$, Mertens' estimate for $\sum_{p\leq P}1/p$, and the crude bound $|\mathcal{I}|\asymp P/\log P$.  Now, Markov's inequality gives~\eqref{eq:reciprocal-small} with probability $1+o(1)$.

Let $\cE_0$ be the exceptional set given by Lemma~\ref{lem:ambient}; we shall take $\cE_Q=\cE_0$.  Let $B_p$ denote the indicator of the event $p\in\cP_Q$.  Fix a pair $(q,\chi)$ with $q\leq Q$, $q\notin\cE_0$, and $\chi\bmod q$ non-principal, and let $\chi^*$ be the primitive character inducing $\chi$.  Set
$$
X_{q,\chi}=\sum_{p\in\cI}(1-\re\chi^*(p))B_p.
$$
For this fixed pair, the summands $(1-\re\chi^*(p))B_p$, $p\in\cI$, are independent and lie in $[0,2]$, and
$$
X_{q,\chi}=\sum_{p\in\cP_Q}(1-\re\chi^*(p)).
$$
By Lemma~\ref{lem:ambient},
$$
\E X_{q,\chi}
=\rho\sum_{p\in\cI}(1-\re\chi^*(p))
\geq 0.99M(\log Q)^{-\eps/2}
=0.99(\log Q)^{1+\eps/2}.
$$
For large $Q$, this expectation is at least $40\log Q$.  Lemma~\ref{lem:probability} therefore gives
$$
\Prb(X_{q,\chi}<20\log Q)
\leq \exp\bigl(-c_1(\log Q)^{1+\eps/2}\bigr)
$$
with an absolute constant $c_1>0$.  There are at most $Q^2$ pairs $(q,\chi)$ with $q\leq Q$.  Hence, by the union bound, the probability that~\eqref{eq:random-separation} fails for at least one such pair is
$$
O\left(Q^2\exp\bigl(-c_1(\log Q)^{1+\eps/2}\bigr)\right)=o(1).
$$
This proves the lemma.
\end{proof}

\section{Generation of the multiplicative group}

We are now ready to prove Theorem~\ref{thm_generation}.

\begin{proof}[Proof of Theorem~\ref{thm_generation}]
 It suffices to consider moduli $q\geq 3$. For bounded $Q$ the theorem is trivial by taking a large implicit constant in the exceptional set estimate, so assume that $Q$ is sufficiently large in terms of $\eps$ and $A$.

Choose a set $\cP_Q$ satisfying Lemma~\ref{lem:randomsep}; such a set exists because the probability there is $1+o(1)$.  Then $|\cP_Q|\leq (\log Q)^{1+\eps}$, every prime in $\cP_Q$ is at most $(\log Q)^{C_*(A+1)}$, and
\begin{equation}\label{eq:chosen-recip}
    \sum_{p\in\cP_Q}\frac{1}{p}\leq (\log Q)^{-A}.
\end{equation}

Let $\cE_Q$ be the exceptional set in Lemma~\ref{lem:randomsep}.  We enlarge it by also excluding moduli divisible by at least one prime in $\cP_Q$.  By~\eqref{eq:chosen-recip}, the number of additionally excluded moduli is at most
$$
\sum_{p\in\cP_Q}\frac{Q}{p}
\leq Q(\log Q)^{-A}.
$$
Hence the enlarged exceptional set still has cardinality
$$
O\bigl(Q(\log Q)^{-A}\bigr).
$$

Let $q\leq Q$ lie outside this enlarged exceptional set.  Then every $p\in\cP_Q$ is coprime to $q$.  Therefore, if $\chi\bmod q$ is induced by $\chi^*$, then $\chi(p)=\chi^*(p)$ for all $p\in\cP_Q$, and Lemma~\ref{lem:randomsep} gives
\begin{equation}\label{eq:D-final}
D_{\cP_Q}(\chi)
\coloneqq \sum_{p\in\cP_Q}(1-\re\chi(p))
\geq 20\log Q
\end{equation}
for every non-principal $\chi\bmod q$.

For $a\in(\Z/q\Z)^\times$ define
$$
T_q(a)=\sum_{(b_p)\in\{0,1\}^{\cP_Q}}
\1_{\prod_{p\in\cP_Q}p^{b_p}\equiv a\bmod q}.
$$
By orthogonality of characters on $(\Z/q\Z)^\times$,
\begin{equation}\label{eq:T-orthogonality}
\begin{split}
T_q(a)
& =\frac{1}{\phi(q)}\sum_{\chi\bmod q}\overline\chi(a)
\prod_{p\in\cP_Q}(1+\chi(p))\notag  \\
& =\frac{2^{|\cP_Q|}}{\phi(q)}
+\frac{1}{\phi(q)}\sum_{\substack{\chi\bmod q\\ \chi\ne\chi_0}}
\overline\chi(a)\prod_{p\in\cP_Q}(1+\chi(p)).
\end{split} 
\end{equation}
The principal term is at least $2^{|\cP_Q|}/Q$.  For a non-principal $\chi$, the arithmetic-geometric mean inequality and~\eqref{eq:D-final} give
\begin{align*}
\left|\prod_{p\in\cP_Q}(1+\chi(p))\right|
& \leq \left(\frac{1}{|\cP_Q|}\sum_{p\in\cP_Q}|1+\chi(p)|^2\right)^{|\cP_Q|/2} \\
& =\left(4-\frac{2D_{\cP_Q}(\chi)}{|\cP_Q|}\right)^{|\cP_Q|/2}              \\
& =2^{|\cP_Q|}\left(1-\frac{D_{\cP_Q}(\chi)}{2|\cP_Q|}\right)^{|\cP_Q|/2}   \\
& \leq 2^{|\cP_Q|}\exp\left(-\frac{D_{\cP_Q}(\chi)}{4}\right)                  \\
& \leq 2^{|\cP_Q|}Q^{-5}.
\end{align*}
Consequently the absolute value of the total non-principal contribution in~\eqref{eq:T-orthogonality} is at most $2^{|\cP_Q|}Q^{-5}$, while the principal contribution is at least $2^{|\cP_Q|}/Q$.  Thus $T_q(a)>0$ for every $a\in(\Z/q\Z)^\times$.

Every unit modulo $q$ is therefore represented as a subset product of primes in $\cP_Q$.  Since no prime in $\cP_Q$ divides $q$, every subset product is a unit modulo $q$, so the set of subset products is exactly $(\Z/q\Z)^\times$.  This proves Theorem~\ref{thm_generation}.
\end{proof}

\end{document}